\newcommand{\fig}[1]{Fig.~\ref{#1}}
\newcommand{\ceil}[1]{\left\lceil {#1} \right\rceil}
\newcommand{\abs}[1]{\left\lvert {#1} \right\rvert}
\newcommand{\cF}{\mathcal{F}}
\newcommand{\mmm}{\mathrm{mm}}
\renewcommand{\orcidID}[1]{\href{https://orcid.org/#1}{\includegraphics[scale=.03]{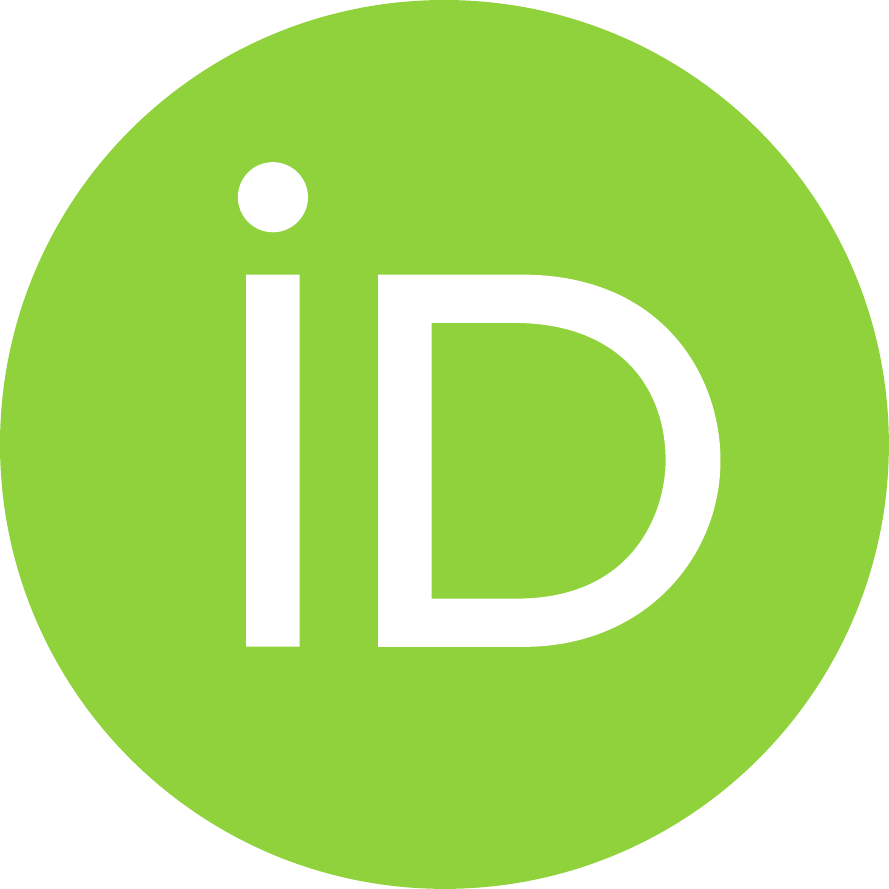}}}
\let\doendproof\endproof
\renewcommand\endproof{~\hfill$\qed$\doendproof}
\begin{document}
\title{Augmenting Geometric Graphs with Matchings} 
%
%
\author{Alexander Pilz\inst{1} \and
Jonathan Rollin\inst{2}\orcidID{0000-0002-6769-7098} \and
Lena Schlipf\inst{3}\orcidID{0000-0001-7043-1867}\thanks{This research is supported by the Ministry of Science, Research and the Arts Baden-W\"urttemberg (Germany).} \and
Andr\'e Schulz\inst{2}\orcidID{0000-0002-2134-4852}}
\authorrunning{A. Pilz et al.}
%
\institute{Graz University of Technology\\
\email{apilz@ist.tugraz.at}	 \and
FernUniversit\"at in Hagen\\
\email{\{jonathan.rollin | andre.schulz\}@fernuni-hagen.de} \and
Universit\"at T\"ubingen\\
\email{lena.schlipf@uni-tuebingen.de}
}
\maketitle              
\begin{abstract}
We study noncrossing geometric graphs and their disjoint compatible geometric matchings.
Given a cycle (a polygon) $P$ we want to draw a set of pairwise disjoint straight-line edges with endpoints on the vertices of $P$ so that these new edges neither cross nor contain any edge of the polygon.
We prove {\sf NP}-completeness of deciding whether there is such a perfect matching.
For any $n$-vertex polygon, with $n\geq 4$, we show that such a matching with $< n/7$ edges is not maximal, that is, it can be extended by another compatible matching edge.
We also construct polygons with maximal compatible matchings with $n/7$ edges, demonstrating the tightness of this bound.
Tight bounds on the size of a minimal maximal compatible matching are also obtained for the families of $d$-regular geometric graphs for each $d\in\{0,1,2\}$.
Finally we consider a related problem. We prove that it is {\sf NP}-complete to decide whether a noncrossing geometric graph $G$ admits a set of compatible noncrossing edges such that $G$ together with these edges has minimum degree five.

\keywords{Geometric graph  \and Compatible matching \and Graph augmentation.}
\end{abstract}
\section{Introduction}
A geometric graph is a graph drawn in the plane with straight-line edges. Throughout this paper we additionally assume that all geometric graphs are noncrossing. Let $G$ be a given (noncrossing) geometric graph $G$. We want to augment $G$ with a geometric matching on the vertices of $G$ such that no edges cross in the augmentation.
We call such a (geometric) matching \emph{compatible} with~$G$.
Note that our definition of a compatible matching implies that the matching is noncrossing and avoids the edges of~$G$.
Questions regarding compatible matchings were first studied by Rappaport~et~al.~\cite{Rap89,RIT86}.
Rappaport~\cite{Rap89} proved that it is {\sf {\sf NP}}-hard to decide whether for a given geometric graph $G$ there is a compatible matching $M$ such that $G+M$ is a (spanning) cycle. 
Recently Akitaya~et~al.~\cite{AKRST19} confirmed a conjecture of Rappaport and proved that this holds even if~$G$ is a perfect matching.
Note that in this case also $M$ is necessarily a perfect matching.
However, for some compatible perfect matchings $M$ the union $G+M$ might be a collection of several disjoint cycles. 
There are graphs $G$ that do not admit any compatible perfect matching, even when $G$ is a matching.
Such matchings were studied by Aichholzer~et~al.~\cite{CompMatchingConj} who proved that each $m$-edge perfect matching $G$ admits a compatible matching of size at least $\frac{4}{5} m$.
Ishaque~et~al.~\cite{disjoint_matchings} confirmed a conjecture of Aichholzer~et~al.~\cite{CompMatchingConj} which says that any perfect matching $G$ with an even number of edges admits a compatible perfect matching.
For a geometric graph $G$ let $d(G)$ denote the size of a largest compatible matching of $G$ and for a family $\cF$ of geometric graphs  let~$d(\cF)=\min\{d(G)\mid G\in\cF\}$.
Aichholzer~et~al.~\cite{aght-cmgg-11} proved that for the family~$T_n$ of all $n$-vertex geometric trees $\frac{1}{10}n \leq d(T_n) \leq \frac{1}{4} n$ holds and for the family~$P_n$ of all $n$-vertex simple polygons~$\frac{n-3}{4} \leq d(P_n) \leq \frac{1}{3} n$ holds. 

We continue this line of research and consider the following problems. Given a polygon, we first show that it is {\sf NP}-complete to decide whether the polygon admits a compatible perfect matching. %
%
Then we ask for the \enquote{worst} compatible matchings for a given polygon.
That is, we search for small maximal compatible matchings, where a compatible matching $M$ is maximal if there is no compatible matching $M'$ that contains $M$.
We study such matchings also for larger families of $d$-regular geometric graphs.
%


The first studied problem can also be phrased as follows: Given a geometric cycle, can we add edges to obtain a cubic geometric graph? 
In the last section, we consider a related augmentation problem. Given a geometric graph, we show that it is {\sf NP}-complete to decide whether the graph can be augmented to a graph of minimum degree five. The corresponding problem for the maximum vertex degree
asks to add a \emph{maximal} set of edges to the graph such that the maximum vertex degree is bounded from above by a constant. This problem is also known to be {\sf NP}-complete for maximum degree at most~seven~\cite{jansen}.

A survey of Hurtado and T\'oth~\cite{HT13} discusses several other augmentation problems for geometric graphs.
Specifically it is {\sf NP}-hard to decide whether a geometric graph can be augmented to a cubic geometric graph~\cite{p-acg-12} and also whether an abstract planar graph can be augmented to a cubic planar graph (not preserving any fixed embedding)~\cite{HRR15}.
Besides the problems mentioned in that survey, decreasing the diameter~\cite{CGKPSTW17} and the continuous setting (where every point along the edges of an embedded graph is considered as a vertex) received considerable attention~\cite{BBCGL19,DCGSS17}.

\section{Compatible Perfect Matchings in Polygons}

\begin{theorem}\label{thm:perfMatchingHard}
	Given a simple polygon, it is {\sf NP}-complete to decide whether it admits a compatible perfect matching.
\end{theorem}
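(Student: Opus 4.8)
The plan is to prove the two directions of NP-completeness separately. Membership in {\sf NP} is immediate: a compatible perfect matching $M$ serves as a certificate of size $O(n)$, and one checks in polynomial time that $M$ covers every vertex, that no two edges of $M$ cross, and that no edge of $M$ crosses or contains a polygon edge. The substance of the theorem is {\sf NP}-hardness, which I would establish by a reduction from Planar 3-SAT (or a planar variant thereof), exploiting the fact that the variable--clause incidence graph can be drawn in the plane and that the polygon we construct can be routed along this drawing. Recall that a perfect matching augments the cycle to a cubic geometric graph, so we are deciding cubic augmentability of a given polygon.

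The core idea is to encode truth values by the two noncrossing perfect matchings of a thin corridor. First I would design a \emph{wire} gadget: a narrow zigzag channel bounded by the polygon carrying an even number of vertices whose crossing-free matchings come in exactly two \enquote{phases}, a left-shifted one and a right-shifted one, interpreted as \emph{true} and \emph{false}. Since the matching inside such a corridor is forced into one of these two phases, the chosen phase propagates unchanged along the wire, and by bending the corridor I can route a wire to any position dictated by the planar embedding; negated literals are accommodated by inserting a phase-inverting bend. Care is needed so that no matching edge coincides with or contains a polygon edge (forbidden by the containment condition) and so that no matching edge can \enquote{shortcut} across the corridor, which would destroy the two-phase behaviour.

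Next I would build the variable and clause gadgets. A variable gadget is a junction from which several wires emanate, locked to a common phase, so that a single truth value is broadcast to every clause containing that variable; its vertices admit a consistent perfect matching only when all outgoing wires carry the same phase. A clause gadget is where three wires meet together with one extra, \enquote{unbalanced} vertex: the local geometry is arranged so that every vertex of the gadget can be matched if and only if at least one of the three incoming wires arrives in the satisfying phase, thereby freeing a partner for the extra vertex. If all three literals are false, a parity obstruction leaves one vertex unmatched, and no compatible perfect matching of the whole polygon can exist.

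Having assembled the gadgets, I would stitch their corridors into a single closed boundary, verifying that the result is one simple (noncrossing) polygon with an even number of vertices, and argue the equivalence in both directions: a satisfying assignment yields phases that extend to a compatible perfect matching, and conversely any compatible perfect matching, being forced into a phase on every wire, induces a consistent assignment satisfying every clause. The main obstacle is the clause gadget together with global geometric realizability: one must simultaneously guarantee that (i) the OR semantics hold \emph{exactly}, so that no \enquote{all false} combination sneaks through via an unintended matching edge, and (ii) all gadgets and their connecting corridors can be laid out with straight-line edges so that the polygon stays simple and the only crossing-free perfect matchings are the intended, phase-consistent ones.
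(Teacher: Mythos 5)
Your proposal has a genuine gap in the clause gadget, and it stems from the choice of source problem. You reduce from plain Planar 3-SAT and want OR semantics: the clause is happy as soon as \emph{at least one} wire arrives in the true phase, each true wire \enquote{freeing a partner} for the single extra vertex. But in a \emph{perfect} matching reduction this runs into a parity obstruction of exactly the kind you invoke against the all-false case. If every true wire frees exactly one partner vertex at the clause, then a clause with $t$ true literals leaves $t+1$ vertices to be matched locally (the extra vertex plus the $t$ freed partners). This is even for $t=1$ and $t=3$ but odd for $t=2$: a satisfying assignment in which some clause has exactly two true literals cannot be extended to a compatible perfect matching, since the second freed partner has nowhere to go and phases propagate rigidly along wires (truth cannot be \enquote{dropped} en route). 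So the completeness direction fails --- the formula may be satisfiable while your polygon admits no compatible perfect matching. This is precisely why the paper reduces from \textsc{positive planar 1-in-3-SAT} (Mulzer--Rote): the exactly-one-true semantics matches the rigidity of perfect matchings, the clause gadget accepts exactly one true wire, and dedicated slack vertices at the bottom of the gadget absorb the two false wires. Your gadget's semantics (\enquote{one freed partner is consumed by one extra vertex}) is in fact 1-in-3 semantics, not OR semantics; with the positive variant you also get rid of negations, making your phase-inverting bends unnecessary.

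A secondary, smaller gap: in a perfect matching instance \emph{every} polygon vertex must be matched, including the vertices you introduce when bending corridors and routing wires along the planar embedding. You do not say how these corner vertices get partners without interfering with the two-phase behaviour of the wires. The paper solves this with a dedicated \emph{twin-peaks} gadget --- a polygonal chain whose interior vertices see only each other, so they are forced to match among themselves in any perfect matching --- which lets the construction bend and merge polygons (via small tunnels) without creating vertices that need external partners. Some device of this kind is needed to make your routing and your final stitching into a single simple polygon sound.
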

\begin{proof}
	The problem is obviously in {\sf NP}, as a certificate one can merely provide the added edges.
	{\sf NP}-hardness is shown by a reduction from \textsc{positive planar 1-in-3-SAT}.
	In this problem, shown to be {\sf NP}-hard by Mulzer and Rote~\cite{mulzer_rote}, we are given an instance of 3-SAT with a planar variable--clause incidence graph (i.e., the graph whose vertices are the variables and clauses, which are connected by an edge if and only if the variable occurs in the clause) 
	and no negative literals;
	the instance is considered satisfiable if and only if there is exactly one true variable per clause.
	
	For a given 1-in-3-SAT formula, we take an embedding of its incidence graph and replace its elements by gadgets.
	We first show that finding compatible matchings for a set of disjoint simple polygons is hard and we then show how to connect the individual polygons to obtain a single polygon.
	
	Our construction relies on a gadget that restricts the possible matching edges of vertices. In particular, we 
	introduce a polygonal chain, whose vertices need to be matched to each other in any perfect matching. 
	This is achieved by the \emph{twin-peaks gadget} as shown in \fig{fig:twin_peaks}.
\begin{figure}[tb]
\centering
\includegraphics[width=\textwidth]{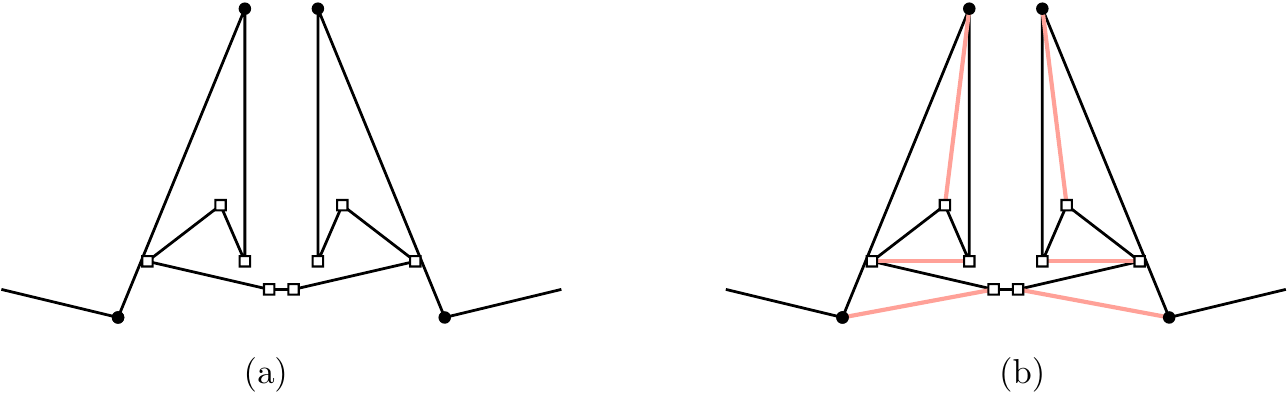}
\caption{(a) This gadget allows for simulating a \enquote{bend} in the polygon without a vertex that needs to be matched.
The construction is scaled such that the eight points marked with squares do not see any other point outside of the gadget (in particular, narrowing it horizontally). (b) A possible matching is shown in red.}
\label{fig:twin_peaks}
\end{figure}
	The gadget is scaled such that the eight vertices in its interior (which are marked with squares in \fig{fig:twin_peaks})  do not see any edges outside of the gadget.
	(We say that a vertex \emph{sees} another vertex if the relative interior of the segment between them does not intersect the polygon.)
	The two topmost vertices must have an edge to the vertices directly below as the vertices below do not see any other (nonadjacent) vertices.
The remaining six \enquote{square} vertices do not have a geometric perfect matching on their own, so any geometric perfect matching containing them must connect them to the two bottommost 
	vertices. Clearly, there is such a matching.
	%
	
	We now present the remaining gadgets (\emph{wire}, \emph{split}, and \emph{clause}) for our reduction.
	The ideas are inspired by the reduction of Pilz~\cite{p-acg-12} who showed that augmenting an arbitrary geometric graph to a crossing-free cubic graph is {\sf NP}-complete. 
	In the following illustrations, vertices of degree two are drawn as a dot.
	Other vertices in the figures represent a sufficiently small twin-peaks gadget.
	
	The \emph{wires} propagate the truth assignment of a variable.
	A wire consists of a sequence of polygons, each containing four vertices of degree two (ignoring twin-peak vertices).
There are only two possible global matchings for these vertices; see \fig{fig:gadgets-wire}(a).
	A \emph{bend} in a wire can be drawn as shown in \fig{fig:gadgets-wire}(b).
	The truth assignment of a wire can be duplicated by a \emph{split gadget};
	see \fig{fig:gadgets-wire}(c).
	A variable is represented by a cyclic wire with split gadgets.
	Recall that in our reduction, we do not need negated variables.
	%
		\begin{figure}[tb]
		\centering
		\includegraphics[width=\textwidth]{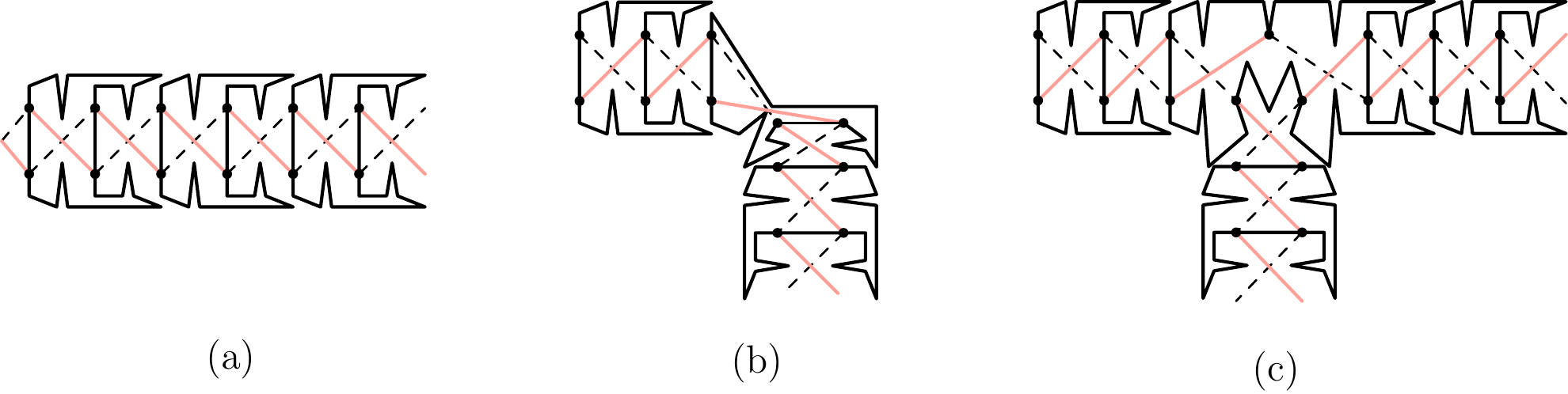}
		\caption{(a) A wire gadget and its two truth states (one in dashed, the other in solid red).
			(b) A bend in a wire gadget. (c) A split gadget that transports the truth setting of one wire to two other ones.
			This is used for representing the variables.}
		\label{fig:gadgets-wire}
	\end{figure}
	The \emph{clause gadget}
	is illustrated in \fig{fig:gadgets}, where the wires enter from the top.
	The vertices there can be matched if and only if one of the vertices is connected to a wire that is in the true state.
	The vertices at the bottom of the gadget make sure that if there are exactly two wires in the false state, then we can add an edge to them.
	Hence, this set of polygons has a compatible perfect matching if and only if the initial formula was satisfiable.

	\begin{figure}[htb]
		\centering
		\includegraphics[width=.8\textwidth,page=2]{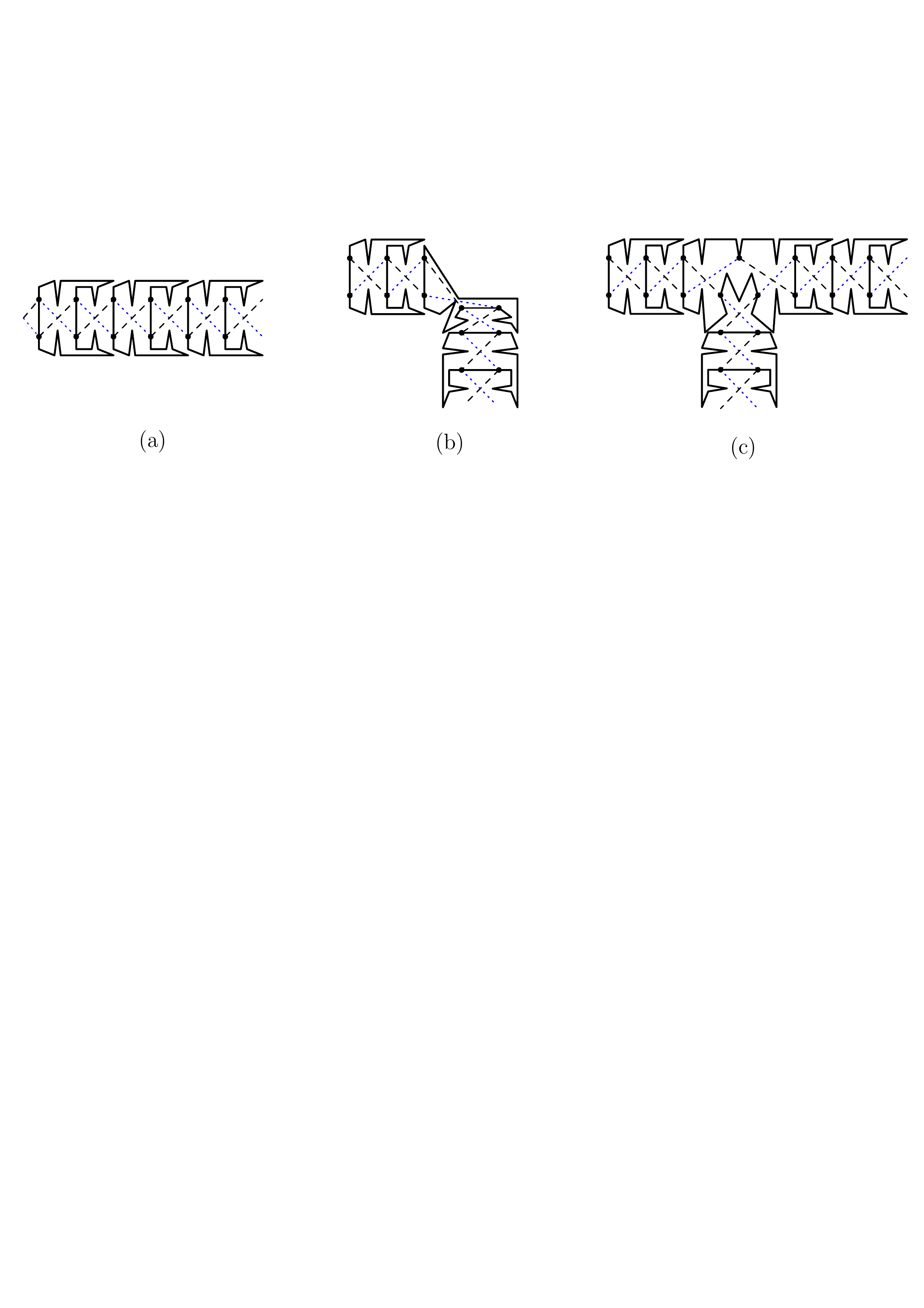}
		\caption{The clause gadget.
			The visibility among the vertices of degree two is indicated by the lighter lines.
			Exactly one vertex of degree two of the part in the circle must be connected to a wire above that carries the true state.}
		\label{fig:gadgets}
	\end{figure}

	It remains to \enquote{merge} the polygons of the construction to one simple polygon.
	Observe that two neighboring polygons can be merged by a small tunnel using four new bends with twin-peaks gadgets line in Fig.~\ref{fig:mergestep}, without affecting the possible compatible perfect matchings of the other vertices.
	We can consider the incidence graph to be connected (otherwise the reduction splits into disjoint problems).
	Hence, we can always merge two distinct neighboring polygons, until there is only a single polygon left.
	\begin{figure}[tb]
		\centering
		\includegraphics{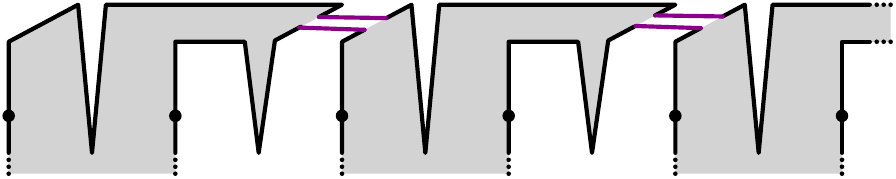}
		\caption{Merging neighboring polygons to a single polygon.}
		\label{fig:mergestep}
	\end{figure}	
\end{proof}

\section{Compatible Maximal Matchings in Geometric Graphs}
 For a geometric graph $G$ let $\mmm(G)$ denote the size of a minimal maximal compatible matching of $G$ and for a family $\cF$ of geometric graphs  let $\mmm(\cF)=\min\{\mmm(G)\mid G\in\cF\}$.
 For a geometric graph $G$ and a maximal compatible matching $M$ we define the following parameters (illustrated in Fig.~\ref{fig:lemma1}):
 \begin{itemize}
 	\item $i_{GM}$ denotes the number of isolated vertices in $G+M$,
 	\item $\Delta_{GM}$ denotes the number of triangular faces in $G$ incident to unmatched vertices only,
 	\item $\sigma_{GM}$ denotes the number of faces of $G+M$ incident to matched vertices only,
 	\item $\nu_{GM}$ denotes the number of edges $uv$ in $G$ where $u$ is unmatched, $v$ is matched, and $uv$ is incident to a reflex angle at $u$ in $G+M$ (see \fig{fig:matchedUnmatchedInc}), 	
 	\item $r^u_{GM}$ and $r^m_{GM}$ denote the number of unmatched and matched vertices incident to a reflex angle in $G+M$, respectively.
 \end{itemize}
 	Here, we call an angle reflex if it is of degree strictly larger than~$\pi$ (there is an angle of degree $2\pi$ at vertices of degree $1$ in $G+M$ and there is no angle considered at isolated vertices).
	Analogically, we call an angle convex if it is of degree $\pi$ or  smaller than~$\pi$.
	
	We assume that the vertices of the considered graphs are in general position. That means that no three vertices are collinear.
\begin{figure}[htb]
	\centering
	\includegraphics{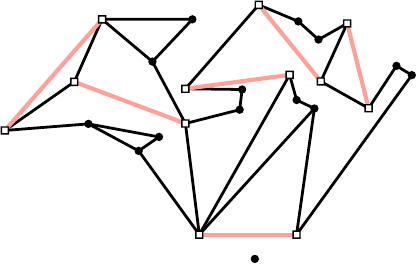}
	\caption{A geometric graph $G$ (black) and a maximal compatible matching $M$ (red). Here $i_{GM}=\Delta_{GM}= 1$, $\sigma_{GM}=2$, $\nu_{GM}=10$, $r^u_{GM}=11$, and $r^m_{GM}=10$.}
	\label{fig:lemma1}
\end{figure}

The following lemma gives a general lower bound on the size of any maximal matching in terms of the parameters introduced above.
We use this bound later to derive specific lower bounds for various classes of geometric graphs below.

\begin{lemma}\label{lem:mmm-generalBound}
 For each geometric graph $G$ and each maximal compatible matching~$M$ of $G$ we have 
  \begin{eqnarray*}2\abs{V(G)} + \nu_{GM}+2\, \sigma_{GM} - r^u_{GM} - 2\,r^m_{GM}  -\; \sum_{\mathclap{u\in V(M)}}\;d_G(u) - \Delta_{GM}  -2 \leq 2 \abs{E(M)}.\end{eqnarray*} 
\end{lemma}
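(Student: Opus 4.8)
The plan is to read the inequality as the assertion that the unmatched vertices of $G+M$ cannot be too numerous, and to prove it by combining Euler's formula for the plane graph $H=G+M$ with the maximality of $M$ in a local form. I would extend $H$ to a triangulation $\mathcal{T}$ of the vertex set by adding chords inside each face (and filling the convex hull). The key observation is that \emph{no chord of $\mathcal{T}$ joins two unmatched vertices}: such a chord is not an edge of $G$ (it is a newly added diagonal), it crosses no edge of $H\subseteq\mathcal{T}$, and by general position its relative interior meets no edge of $G$, so it could be appended to $M$, contradicting maximality. Hence every added chord is incident to at least one matched vertex; equivalently, inside every face of $H$ the unmatched boundary vertices are pairwise boundary-adjacent or mutually invisible.

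First I would extract the geometric content of this observation face by face. In a \emph{convex} face every two non-adjacent boundary vertices see each other, so a convex face with at least four corners can carry at most two unmatched vertices, which moreover must be consecutive along the boundary. The only way a face can host more unmatched vertices is for reflex corners to block the visibility between them; this is exactly why the reflex counts $r^u_{GM}$ and $r^m_{GM}$, together with the matched--unmatched incidences $\nu_{GM}$ across a reflex angle, enter the bound. Two kinds of faces escape the argument and therefore appear as correction terms: a triangular face of $G$ all of whose vertices are unmatched has no diagonal at all, so maximality forces nothing there, accounting for the $-\Delta_{GM}$ term, while a face all of whose corners are matched contributes nothing to the unmatched count, accounting for the $+2\,\sigma_{GM}$ term.

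I would then turn these local statements into the global inequality by counting. Triangulating the faces, I would bound the number of chords incident to each matched vertex $v$ in terms of the angles of $H$ at $v$ — a convex corner at $v$ spans essentially one triangle, while the single possible reflex corner at $v$ must be added back — and sum these local bounds. Applying Euler's formula $|V|-|E|+|F|=1+c$ to $H$ (equivalently, the angle-sum identity $\sum_{d(v)\ge 1}(\text{angles at }v)=2\pi(|V(G)|-i_{GM})$), and using $|V(M)|=2\,|E(M)|$ together with $\sum_{u\in V(M)}d_G(u)$ to count the edges of $G$ at matched vertices, the triangulation-dependent quantities and $|E(G)|$ are designed to cancel, leaving precisely the claimed relation between $2|V(G)|$, the reflex and face parameters, and $2|E(M)|$.

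The main obstacle is the exact bookkeeping of the correction terms rather than the idea itself. One must charge each unmatched vertex consistently to a matched neighbour, to a reflex corner, or to one of the special faces without double counting, and simultaneously obtain the coefficients exactly right ($1$ on $r^u_{GM}$ and $\nu_{GM}$, $2$ on $r^m_{GM}$ and $\sigma_{GM}$). Along the way the degenerate situations need care: the outer face, isolated vertices (which carry no angle and feed the $i_{GM}$ term), degree-one vertices (whose single incident angle is the reflex angle $2\pi$), and disconnected graphs (where the component count $c$ enters Euler's formula). Handling these uniformly, so that the final inequality holds with the additive constant $-2$, is where the real work lies.
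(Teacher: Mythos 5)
Your central observation---that, by maximality of $M$, no segment drawn inside a face of $G+M$ can join two unmatched vertices, so unmatched vertices incident to a common face either do not see each other or are joined by an edge of $G$---is precisely the observation on which the paper's proof rests, and your reading of the roles of $\Delta_{GM}$, $\sigma_{GM}$, $\nu_{GM}$ and the reflex counts is correct. However, the proposal stops exactly where the lemma has to be proven: the inequality with its precise coefficients is never derived, only announced to follow from a cancellation that is \enquote{designed} to work, and you yourself flag the bookkeeping as the open part. Worse, the specific mechanism you sketch runs into concrete obstacles. First, the step \enquote{a convex corner at $v$ spans essentially one triangle} is false in general: a convex corner of a face may be subdivided into arbitrarily many triangles of $\mathcal{T}$, and nothing in the sketch controls how many chords end inside a given corner, so the local bounds you want to sum are not available. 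Second, triangulating the convex hull brings the hull size $h$ into Euler's formula (a triangulation of the point set has $2\abs{V(G)}-h-2$ triangles), a quantity that does not appear in the lemma and that must be cancelled by boundary corrections the sketch does not supply; in particular, unmatched hull vertices have their exterior reflex angle unfilled by triangles, which breaks the count \enquote{a reflex corner forces one extra triangle}. Third, per-triangle counting only gives \enquote{at most two unmatched vertices except for the $\Delta_{GM}$ triangles}; the additional savings $\nu_{GM}+2\,\sigma_{GM}$ cannot be read off triangle by triangle, because a triangle incident to a $\nu$-edge $uv$ may legitimately have an unmatched third vertex joined to $u$ by an edge of $G$.

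The paper's proof resolves exactly these three issues by a different subdivision, not a triangulation: it encloses everything in a rectangle (eliminating hull effects), shoots two collinear rays from each isolated vertex, and cuts every reflex angle of $G+M$ by a single segment ending on an existing edge. The result is a connected plane graph $D$ all of whose bounded faces are \emph{convex}, so maximality applies face by face in its strongest form: every face of $D$ has at most two unmatched vertices except the $\Delta_{GM}$ triangles (so $F_3=\Delta_{GM}$ and $F_i=0$ for $i\geq 4$), the faces created at $\nu$-edges and at isolated vertices contain exactly one unmatched vertex (so $F_1\geq 2\,i_{GM}+\nu_{GM}$), and $F_0\geq 1+\sigma_{GM}$. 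Counting incidences between unmatched vertices and faces of $D$ and substituting Euler's formula for $D$ then produces the inequality with exactly the stated coefficients, including the constant $-2$. To make your triangulation route work you would effectively have to perform this reflex-angle-cutting inside each face before triangulating---at which point you are reproducing the paper's argument; as written, the deferred bookkeeping \emph{is} the lemma.
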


\begin{proof} 
 We subdivide the plane into cells as follows.
 First draw a rectangle enclosing $G$ in the outer face (with four vertices and four edges).
 For each isolated vertex in $G+M$ (one after the other) draw two collinear straight-line edges, both starting at that vertex and until they hit some already drawn edges $e$ and $e'$. 
 The direction of these new edges is arbitrary as long as they do not hit any vertex. Their endpoints become new vertices (subdividing $e$ and $e'$).
 Similarly, for each vertex $u\in V(G)$ incident to some reflex angle in the resulting drawing we draw (one after the other) a straight-line edge starting at $u$.
 The direction of this new edge is chosen such that it cuts the reflex angle at $u$ into two convex angles and such that it stops on some already drawn edge (but not a vertex) which is then subdivided by a new vertex. 
 Avoiding to hit vertices is possible as the points are in general position.
 See \fig{fig:subdivdeMax}.
\begin{figure}[tb]
	\centering
	\includegraphics{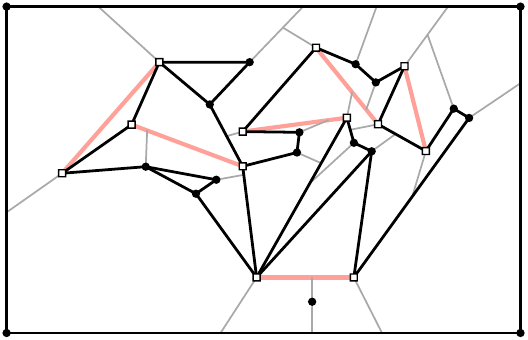}
	\caption{The geometric graph (black) with maximal matching (red) from Fig.~\ref{fig:lemma1} where each reflex angle is cut by a gray edge.}
	\label{fig:subdivdeMax}
\end{figure}
 Let~$D$ denote the final plane graph.
 Then each bounded face in $D$ is convex and $D$ is connected.
 Further, $D$ has exactly  $\abs{V(G)}+ r^u_{GM} + r^m_{GM} + 2\,i_{GM}+4$ vertices and $\abs{E(G)} + \abs{E(M)} + 2 (r^m_{GM} + r^u_{GM} + 2\,i_{GM})+4$ edges (each edge starting at an isolated vertex and each edge cutting a reflex angle creates a new vertex and subdivides an existing edge into two parts).
 By Euler's formula the number $F_D$ of faces in $D$ is exactly
 \begin{equation*}
  F_D = \abs{E(D)}-\abs{V(D)}+2=\abs{E(G)} - \abs{V(G)} +  \abs{E(M)} + r^m_{GM} + r^u_{GM} + 2\,i_{GM} + 2.
 \end{equation*}
 Let $U=V(G)\setminus V(M)$ denote the set of unmatched vertices of $G$ and let $F_i$ denote the number of faces in $D$ with exactly $i$ vertices of $U$ in their boundary.
 Each isolated vertex in $G+M$ is incident to exactly two faces of $D$, each vertex~$u\in U$ not incident to a reflex angle in $G+M$ is incident to exactly $d_G(u)$ faces of $D$, and each remaining vertex $u\in U$ is incident to exactly $d_G(u)+1$ faces of~$D$.
 Therefore
 \begin{equation}
 2\,i_{GM} + r^u_{GM} + \sum_{u\in U}d_G(u) = \sum_{i\geq 1}i\, F_i.\label{eq:1}
 \end{equation}

 Consider two vertices in $U$ incident to a common face $F$ in $D$.
 The line segment connecting these two vertices is an edge of $G$, otherwise $M$ is not maximal.
 So either $F$ has at most two vertices from $U$ or $F$  is a triangular face of $G$ incident to vertices from $U$ only.
 This shows that $F_3=\Delta_{GM}$ and $F_i=0$ for each $i\geq 4$.  
 Further, each face incident to a vertex that is isolated in $G+M$ is not incident to any other unmatched vertex.
 Similarly, for each edge counted by~$\nu_{GM}$ there is a face in $D$ with only one unmatched vertex in its boundary, see \fig{fig:matchedUnmatchedInc}.
 \begin{figure}[tb]
 	\centering
 	\includegraphics[width=.25\textwidth]{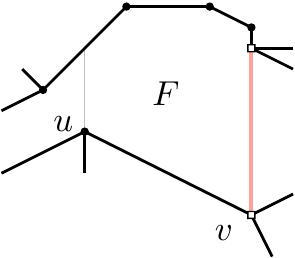}
 	\caption{An edge $uv\in E(G)$ where $u\in V(G)\setminus V(M)$ and $v\in V(M)$ with a reflex angle at~$u$ (in $G+M$). Then $u$ is the only vertex from $V(G)\setminus V(M)$ incident to the face $F$ (obtained by cutting the reflex angle at $u$) since $M$ is maximal.}
 	\label{fig:matchedUnmatchedInc}
 \end{figure}
 Hence~$F_1\geq 2\, i_{GM} + \nu_{GM}$.
 The outer face does not contain any vertices of $U$ and hence~$F_0\geq 1+\sigma_{GM}$.
 Combining these observations with \eqref{eq:1} and $F_2=F_D-F_0-F_1-F_3$ yields
 \begin{align*}
 &2\,i_{GM} + r^u_{GM} + \sum_{u\in U}d_G(u)\\
  = \quad&F_1 + 2\, F_2 + 3\,  \Delta_{GM}\\
  = \quad&2\, F_D-2\, F_0-F_1 +  \Delta_{GM}\\
 \leq\quad &2\abs{E(G)} - 2\abs{V(G)} +  2\abs{E(M)}\\
 & + 2\,i_{GM} + 2\,r^m_{GM} +2\,r^u_{GM} + \Delta_{GM} - \nu_{GM} -2\, \sigma_{GM} +2.
\end{align*}
Now the desired result follows using $\sum\limits_{u\in U}d_G(u)= 2\abs{E(G)} - \sum\limits_{u\in V(M)}d_G(u).$
\end{proof}

The bound of Lemma~\ref{lem:mmm-generalBound} is particularly applicable for regular graphs.

\begin{theorem}\label{thm:mmm-regulargraphs}
	Consider an $n$-vertex geometric graph $G$.
 \begin{enumerate}[label=\bf\alph{enumi}),topsep=2pt]
  \item If $G$ is $0$-regular (a point set) we have $\mmm(G) \geq \frac{n-1}{3}$.
  
  \item If $G$ is $1$-regular (a perfect matching) we have $\mmm(G) \geq \frac{n-2}{6}$.
  
  \item If $G$ is $2$-regular (disjoint polygons) we have $\mmm(G) \geq \frac{n-3}{11}$.
 \end{enumerate}
 All these bounds are tight for infinitely many values of $n$.
\end{theorem}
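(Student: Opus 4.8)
The plan is to derive all three bounds by specializing Lemma~\ref{lem:mmm-generalBound} to the respective value of~$d$, so the bulk of the work is evaluating the six parameters $i_{GM},\Delta_{GM},\sigma_{GM},\nu_{GM},r^u_{GM},r^m_{GM}$ for a $d$-regular graph. Two general-position facts drive this. First, a vertex of degree~$1$ in $G+M$ carries an angle of~$2\pi$, hence a reflex angle; a vertex of degree~$2$ carries exactly one reflex angle, since its two edges are non-collinear and split the full angle into one convex and one reflex part; and a vertex of degree~$3$ carries at most one reflex angle. Second, for $d$-regular $G$ we have $\sum_{u\in V(M)}d_G(u)=2d\,\abs{E(M)}$. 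Throughout I write $m=\abs{E(M)}$ and recall $\abs{V(G)}=n$, with $2m$ matched and $n-2m$ unmatched vertices.

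For~(a), $G$ has no edges, so $\nu_{GM}=\Delta_{GM}=0$ and $\sum_{u\in V(M)}d_G(u)=0$; every unmatched vertex is isolated and carries no angle, giving $r^u_{GM}=0$, while every matched vertex has degree~$1$ in $G+M$, giving $r^m_{GM}=2m$. Substituting into Lemma~\ref{lem:mmm-generalBound} and using $\sigma_{GM}\ge 0$ yields $2n-4m-2\le 2m$, i.e.\ $m\ge (n-1)/3$. For~(b), $G$ is a perfect matching, so $i_{GM}=\Delta_{GM}=0$ and $\sum_{u\in V(M)}d_G(u)=2m$; now every unmatched vertex has degree~$1$ and every matched vertex degree~$2$ in $G+M$, so both are reflex, giving $r^u_{GM}=n-2m$ and $r^m_{GM}=2m$. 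Dropping the nonnegative terms $\nu_{GM},\sigma_{GM}$ gives $n-2\le 6m$, i.e.\ $m\ge (n-2)/6$.

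Part~(c) is the crux. Here $i_{GM}=0$, $\sum_{u\in V(M)}d_G(u)=4m$, every unmatched vertex has degree~$2$ and is therefore reflex ($r^u_{GM}=n-2m$), while matched vertices have degree~$3$ in $G+M$ ($r^m_{GM}\le 2m$). Feeding this into Lemma~\ref{lem:mmm-generalBound} and discarding $\nu_{GM},\sigma_{GM}\ge 0$ leaves the clean estimate $m\ge (n-\Delta_{GM}-2)/8$. The only place where the geometry of disjoint polygons genuinely enters --- and the main obstacle --- is to control $\Delta_{GM}$, the number of triangular faces all of whose vertices are unmatched. I would first observe that such a face is an entire $3$-cycle component of~$G$ (each of its vertices already has $G$-degree~$2$) with all three vertices unmatched, and that maximality of~$M$ forbids any two unmatched vertices from seeing each other along a non-edge; in particular two such triangles, or one such triangle and any other unmatched vertex, would expose a free compatible edge (for instance a bridge of the convex hull of the two components), so the triangles must be mutually \enquote{hidden} and hidden from every other unmatched vertex by matched structure. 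The target is $\Delta_{GM}\le 3m+1$, which combined with the estimate above gives $11m\ge n-3$. I expect to prove it by a charging argument: each all-unmatched triangle is \enquote{guarded} by a bounded number of matched vertices (those blocking its vertices' lines of sight), and one triangle may be charged to the outer face, every matched vertex receiving bounded total charge. Making the guarding precise --- especially when a triangle sits in a pocket of a large reflex polygon whose own pocket vertices must then themselves be matched --- is the delicate part.

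Finally, tightness for each~$d$ is shown by an explicit family realizing equality for infinitely many~$n$. The idea is to chain a small gadget that forces each matching edge to \enquote{pay} for the maximal number of vertices permitted by the bound: three vertices per edge for $d=0$ and six for $d=1$, and, for $d=2$, a gadget packing about eleven vertices around each forced edge together with an unmatchable triangle core accounting for the additive~$-3$. In each case one checks that the exhibited compatible matching is maximal and compares the vertex count with the edge count.
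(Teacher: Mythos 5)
Your parts (a) and (b) are correct and coincide with the paper's proof: the same evaluation of the parameters ($r^u_{GM}=0$, $r^m_{GM}=2m$ for point sets; $r^u_{GM}=n-2m$, $r^m_{GM}=2m$ for perfect matchings) plugged into Lemma~\ref{lem:mmm-generalBound}. In part (c) your intermediate inequality $m\geq (n-\Delta_{GM}-2)/8$ is also correct and matches the paper's use of the lemma. The genuine gap is what you do next: you reduce everything to the claim $\Delta_{GM}\leq 3m+1$ and propose to prove it by a visibility/charging argument (\enquote{each all-unmatched triangle is guarded by a bounded number of matched vertices}), which you yourself flag as the delicate, unfinished step. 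This is not a minor loose end: the blockers that hide an all-unmatched triangle need not be matched vertices at all --- a triangle can sit in a pocket formed by \emph{unmatched} portions of a large cycle, and those pocket vertices in turn can be hidden by further unmatched structure --- so the claim that each triangle can be charged to nearby matched vertices with bounded charge is precisely the hard content, and no argument for it is given. Worse, the target inequality $\Delta_{GM}\leq 3m+1$ is essentially as strong as the theorem itself (given your intermediate inequality, it is equivalent to the bound $m\geq (n-3)/11$), so the reduction does not simplify the problem.

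The paper avoids all of this with a one-line counting bound that you had within reach: you correctly observed that every triangular face counted by $\Delta_{GM}$ is the interior of a $3$-cycle \emph{component} of $G$ all of whose vertices are unmatched. These components are vertex-disjoint and consume three unmatched vertices each, so $\Delta_{GM}\leq (n-2m)/3$ immediately. Substituting this (instead of $3m+1$) into $8m\geq n-\Delta_{GM}-2$ gives $24m\geq 3n-(n-2m)-6$, i.e.\ $22m\geq 2n-6$, which is exactly $m\geq (n-3)/11$. No visibility argument is needed. Separately, your tightness constructions are only sketched (the paper exhibits explicit configurations: groups realizing $3$ vertices per edge for $d=0$, $6$ for $d=1$, and for $d=2$ an $11$-vertices-per-edge gadget that can be nested by replacing a triangle with a scaled copy of the whole configuration); as written, your sketch does not verify maximality of the exhibited matchings, so this part would also need to be completed.
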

\begin{proof}
 First consider a $0$-regular $n$-vertex graph $G$ (a point set).
 Then $r^u_{GM}=0$, $r^m_{GM}=2 \abs{E(M)}$, $\nu_{GM}=\Delta_{GM}=0$, and $\sigma_{GM}\geq 0$ for any maximal compatible matching $M$  of~$G$.
 By Lemma~\ref{lem:mmm-generalBound} we have  $2n - 4\abs{E(M)} -2 \leq 2 \abs{E(M)}$.
 This shows $\mmm(G)\geq (n-1)/3$.
 This is tight due to the graphs $G$ and the maximal matchings given in \fig{fig:tightMMM} (left).
 \begin{figure}[tb]
  \centering
  \includegraphics{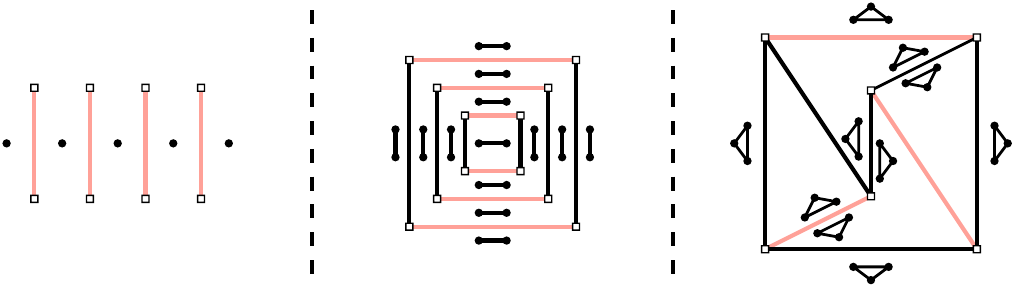}
  \caption{Geometric graphs (black) with minimal maximal compatible matchings (red).}
  \label{fig:tightMMM}
 \end{figure}
 
 Next consider a $1$-regular $n$-vertex graph $G$.
 Each vertex in $G$ is reflex in $G+M$.
 Then $\Delta_{GM}=0$, $\nu_{GM}\geq 0$, $r^u_{GM}=n-2\abs{E(M)}$, $r^m_{GM}=2 \abs{E(M)}$, and $\sigma_{GM}\geq 0$ for any maximal compatible matching $M$ of $G$.
 By Lemma~\ref{lem:mmm-generalBound} we have  $n - 4\abs{E(M)} -2 \leq 2 \abs{E(M)}$.
 This shows $\mmm(G)\geq (n-2)/6$.
 This is tight due to the graphs $G$ and the maximal matchings given in \fig{fig:tightMMM} (middle).
 
 Finally consider a $2$-regular $n$-vertex geometric graph $G$.
 Each vertex in $V(G)\setminus V(M)$ is reflex in $G+M$.
 Then $\nu_{GM}\geq 0$, $r^u_{GM}=n-2\abs{E(M)}$, $r^m_{GM}\leq 2 \abs{E(M)}$, $\sigma_{GM}\geq 0$, and $\Delta_{GM}\leq (n-2\abs{E(M)})/3$ for any maximal compatible matching $M$ of $G$.
 By Lemma~\ref{lem:mmm-generalBound} we have  $n - 6\abs{E(M)} - (n-2\abs{E(M)})/3 -2 \leq 2 \abs{E(M)}$.
 This shows $\mmm(G)\geq (n-3)/11$.
 This is tight due to the graph $G$ and the maximal matching $M$ given in \fig{fig:tightMMM} (right), as an infinite family is obtained by repeatedly replacing an arbitrary triangle with a (scaled) copy of $G+M$.
\end{proof}

\begin{theorem}\label{thm:maxMatchingPoly}
	Let $n\geq 4$ and let $P_n$ denote the family of all $n$-vertex  polygons.
	Then $\mmm(P_n)\geq \frac{1}{7}n$ for all $n$ and this bound is tight for infinitely many values of $n$.
\end{theorem}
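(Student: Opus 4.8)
The plan is to apply Lemma~\ref{lem:mmm-generalBound} to a simple polygon $G$ on $n\ge 4$ vertices together with an arbitrary maximal compatible matching $M$, and to read off every parameter from the cyclic structure of $G$. Write $m=\abs{E(M)}$. Since $G$ is a single cycle, $\abs{V(G)}=\abs{E(G)}=n$ and $\sum_{u\in V(M)}d_G(u)=4m$, because every matched vertex has degree $2$ in $G$. As $n\ge4$, the only bounded face of $G$ is the $n$-gon itself, so $G$ has no triangular face and $\Delta_{GM}=0$, and there are no isolated vertices, so $i_{GM}=0$. Every unmatched vertex has degree $2$ in $G+M$, and by the general-position assumption neither of its two incident angles equals $\pi$; hence exactly one is reflex and $r^u_{GM}=n-2m$. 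Moreover this reflex angle is bounded by \emph{both} polygon edges at the vertex, so every edge of $G$ joining an unmatched vertex to a matched one contributes to $\nu_{GM}$; consequently $\nu_{GM}$ equals the number of edges of $G$ between matched and unmatched vertices. Finally every matched vertex has degree $3$, hence at most one reflex angle, so $r^m_{GM}\le 2m$. Substituting into Lemma~\ref{lem:mmm-generalBound} and simplifying yields
\begin{equation*}
4m \ \ge\ n + \nu_{GM} + 2\,\sigma_{GM} - 2\,r^m_{GM} - 2 .
\end{equation*}

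The crux, and what I expect to be the main obstacle, is that the crude estimate $r^m_{GM}\le 2m$ is attained (e.g.\ for convex polygons, where every matched vertex is reflex) and by itself only gives $m\ge(n-2)/8$. To reach $m\ge n/7$ the reflex matched vertices must be paid for by the positive terms $\nu_{GM}$ and $\sigma_{GM}$, so I would prove the compensation inequality
\begin{equation*}
2\,r^m_{GM} \ \le\ \nu_{GM} + 2\,\sigma_{GM} + 3m - 2
\end{equation*}
by a charging argument over the matching edges. Each matched vertex carries at most one reflex angle; I would charge each such angle either to an incident edge between a matched and an unmatched vertex (accounted by $\nu_{GM}$), to an incident face of $G+M$ all of whose boundary vertices are matched (accounted by $\sigma_{GM}$), or to the matching edge at that vertex, and use maximality of $M$ to forbid two reflex matched vertices from claiming the same witness. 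Since this step works purely with the combinatorial angle data, it is insensitive to whether the chords of $M$ run inside or outside the polygon. Pinning down the additive constant so that the final bound carries \emph{no} additive slack (and thus matches the extremal examples exactly) is the delicate point.

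Combining the two displays gives $4m\ge n-3m$, hence $7m\ge n$ and $\mmm(P_n)\ge\tfrac{1}{7}n$. For tightness I would exhibit, for every $k\ge1$, a simple polygon on $n=7k$ vertices admitting a maximal compatible matching of exactly $k$ edges. The construction is modular: design a gadget on $7$ vertices that forces a single matching edge between its two exposed vertices while hiding the remaining five vertices in pockets, so that no unmatched vertex can see any vertex it could legally be matched to, and then string $k$ such gadgets together into one simple polygon (in the spirit of the merging step of Theorem~\ref{thm:perfMatchingHard}). Verifying maximality, i.e.\ that every candidate augmenting edge either is a polygon edge, crosses the boundary, or crosses an edge of $M$, is the part that requires care; the counting above then certifies that $k=n/7$ is best possible.
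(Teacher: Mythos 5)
Your setup coincides with the paper's and is correct: the substitutions $\sum_{u\in V(M)}d_G(u)=4m$, $\Delta_{GM}=0$, $r^u_{GM}=n-2m$, $r^m_{GM}\le 2m$, and $\nu_{GM}=$ (number of edges of $P$ with one matched and one unmatched endpoint, call these $E_{UM}$ edges) all hold, your first display follows from Lemma~\ref{lem:mmm-generalBound}, and you correctly diagnose that $r^m_{GM}\le 2m$ alone only gives $m\ge(n-2)/8$. The gap is that the step carrying the whole theorem --- your compensation inequality $2r^m_{GM}\le \nu_{GM}+2\sigma_{GM}+3m-2$ --- is never proved, and the charging scheme you sketch cannot prove it. If each reflex matched vertex claims one witness (an incident $E_{UM}$ edge, an incident $\sigma$-face, or its own matching edge), with witnesses pairwise distinct, you get at best $r^m_{GM}\le \nu_{GM}+\sigma_{GM}+m$; feeding $2r^m_{GM}\le 2\nu_{GM}+2\sigma_{GM}+2m$ into your first display leaves $6m\ge n-\nu_{GM}-2$, where $\nu_{GM}$ now appears with the wrong sign (it can be of order $n$), so nothing follows. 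Even the most generous weighting (an $E_{UM}$ edge pays $1$, a $\sigma$-face pays $2$, a matching edge pays $3$) fails: a reflex matched vertex whose only witness is an $E_{UM}$ edge collects $1$ but owes $2$, and the additive $-2$ never materializes. The difficulty is structural: what is needed is a \emph{lower} bound on the positive terms $\nu_{GM}+2\sigma_{GM}$ in terms of $m$, i.e.\ one must manufacture many $E_{UM}$ edges or $\sigma$-faces, and no local charging of reflex matched vertices does that. The paper gets it from a global face count: $P+M$ is connected and planar, so by Euler's formula it has exactly $m+2$ faces; any face whose boundary contains an unmatched vertex also contains a matched one (since $m\ge 1$, as $n\ge4$ forces a diagonal), hence its boundary walk switches between unmatched and matched vertices at least twice, and every switching edge is an $E_{UM}$ edge; since no edge of the cycle $P$ is a bridge, each $E_{UM}$ edge borders exactly two faces, whence $\nu_{PM}+\sigma_{PM}\ge m+2$. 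This single inequality, together with $r^m_{PM}\le 2m$ and $\sigma_{PM}\ge0$, implies your compensation inequality (indeed $\nu+2\sigma+3m-2\ge(m+2)+3m-2=4m\ge 2r^m$), and the constant $2$ you flagged as delicate is exactly the Euler constant --- it cannot be produced by a vertex-local argument.

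The tightness half is also only a plan. You posit a $7$-vertex gadget forcing a single matching edge with five vertices hidden in pockets, but no such gadget is exhibited, and verifying maximality (no unmatched vertex may see any nonadjacent unmatched vertex, including those of neighbouring gadgets and across the merging tunnels) is precisely the nontrivial part you defer. The paper instead displays an explicit polygon whose repeating unit has $14$ vertices and exactly two matching edges (Fig.~\ref{fig:maximalBnd}), which settles $\mmm(P_n)\le n/7$ for infinitely many $n$. So both halves of your argument have genuine gaps, the missing face-counting inequality in the lower bound being the serious one.
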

\begin{proof}
	The construction in Fig.~\ref{fig:maximalBnd} shows that for infinitely many values of $n$ there is an $n$-vertex polygon with a compatible maximal matching of size $\frac{n}{7}$.
	This shows $\mmm(P_n)\leq \frac{n}{7}$ for infinitely many values of $n$.
	
	\begin{figure}[tb]
		\centering
		\includegraphics{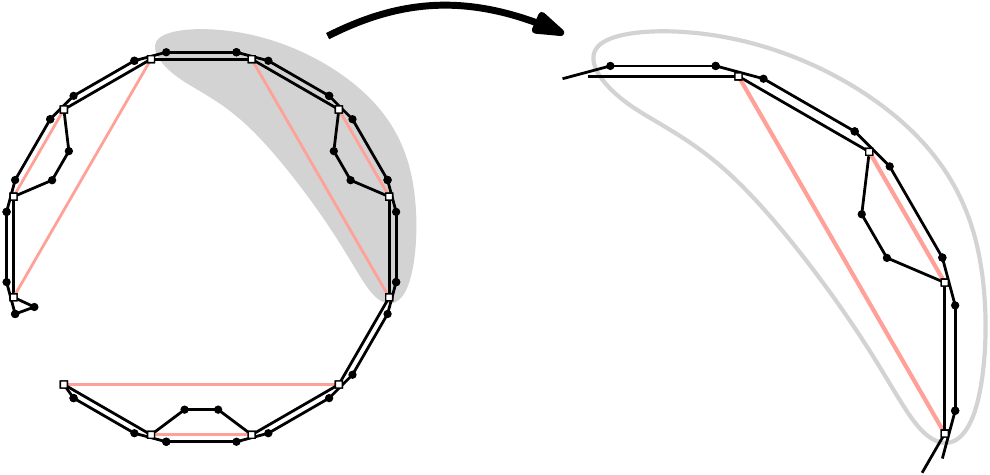}
		\caption{A polygon (black) with a maximal matching (red) with $\frac{n}{7}$ edges (here $n=42$). Note that there are exactly two matching edges between the $14$ vertices in the gray area which can be repeated along a cycle arbitrarily often.}
		\label{fig:maximalBnd}
	\end{figure}

	It remains to prove the lower bound. Let $P$ be an $n$-vertex polygon with a maximal compatible matching~$M$. Since $n\geq 4$,  we have $\abs{E(M)}\geq 1$, $\Delta_{PM}=0$, $r^m_{PM}\leq 2\abs{E(M)}$, and $\sigma_{PM}\geq 0$.
	Let $U=V(P)\setminus V(M)$ denote the unmatched vertices of $P$ and let $E_{UM}$ denote the set of edges $uv$ in $P$ where $u\in U$ and $v\in V(M)$.
	Each vertex in~$U$ has a reflex angle.
	Hence $r^u_{PM}=n-2\abs{E(M)}$ and $\nu_{PM}=\abs{E_{UM}}$. There are $2+\abs{E(M)}$ faces in $P+M$. 
	Each of them either has no vertex from $U$ in its boundary or at least two edges from~$E_{UM}$.
	So~$P+M$ has $2+\abs{E(M)}-\sigma_{PM}$ faces incident to at least two edges from~$E_{UM}$ each.
	Each edge in $E_{UM}$ is on the boundary of two faces of $P+M$.
	Together we have $2\abs{E_{UM}} \geq 2(2+\abs{E(M)}-\sigma_{PM})$ and hence $\nu_{PM}+\sigma_{PM}\geq 2+\abs{E(M)}$.
	Combining these observations with Lemma~\ref{lem:mmm-generalBound} yields~$\abs{E(M)}\geq n/7$, because
	\begin{eqnarray*}
	2n+2+\abs{E(M)}-n-2\abs{E(M)}- 4\abs{E(M)}-4\abs{E(M)}&\leq& \\
	2n+ \nu_{PM}+2\, \sigma_{PM} - r^u_{PM} - 2\,r^m_{PM}  -\; \sum_{\mathclap{u\in V(M)}}\;d_P(u) - \Delta_{PM}  -2 &\leq	& 2 \abs{E(M)}
	\end{eqnarray*}
\end{proof}

For nonregular (abstract) graphs $\hat{G}$ determining a geometric drawing $G$ minimizing $\mmm(G)$ seems harder.
For an integer $n$ and a real number $d$ with $0\leq d\leq 3$, let $\cF_d^n$ denote the family of all (noncrossing) geometric graphs with $n$ vertices and at most $dn$ edges.
Further let $\mmm(d)=\liminf\limits_{n\to\infty}\min\{\mmm(G)/n\mid G\in\cF_d^n\}$.
For each $n$ and each $d\geq 2$ the set $\cF_d^n$ contains a triangulation of a convex polygon (on $2n-3$ edges).
This shows $\mmm(d)=0$ for $d\geq 2$.
Theorem~\ref{thm:mmm-regulargraphs} shows $\mmm(0)=1/3$ and $\mmm(1/2)\leq 1/6$.
The construction  in the following lemma shows $\mmm(d)\leq (2-d)/13$ for $7/10<d<2$.

\begin{lemma}\label{lem:constructionMMM}
	For any integers $m$, $n$ with $n\geq 5$, $\frac{7n+95}{10}\leq m \leq 2n+2$ there is a geometric graph on $n$ vertices and $m$ edges with a maximal compatible matching of size $\ceil{\frac{2n-m+3}{13}}$.	
\end{lemma}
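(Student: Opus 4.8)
The plan is to realize each admissible pair $(n,m)$ by a modular drawing assembled from two kinds of building blocks: a small \emph{matching gadget} that contributes exactly one edge to the matching we construct, and a \emph{filler} region that contributes no matching edge but absorbs the remaining vertices and the surplus edges. The number of matching gadgets will control the size $k=\ceil{\frac{2n-m+3}{13}}$ of the exhibited matching, while the filler lets me tune the vertex and edge counts. As with the twin-peaks gadget and the tight examples behind Theorem~\ref{thm:mmm-regulargraphs} and Theorem~\ref{thm:maxMatchingPoly}, every gadget is scaled to be tiny and its interior vertices are shielded so that they see no vertex outside the gadget; this keeps all interaction local.

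First I would fix a matching gadget with vertex count $v$ and edge count $e$ satisfying $2v-e=13$, designed so that at most one compatible edge fits inside it and, once that edge is added, its remaining vertices are blocked (either pairwise adjacent in $G$ or unable to see any free vertex). Placing $k$ such gadgets along a shielded backbone then yields a compatible matching $M$ with $\abs{E(M)}=k$, and I would argue that $M$ is maximal by checking that no unmatched vertex retains a visible, nonadjacent, noncrossing partner. For the filler I would use a triangulated blob together with shielded pendant vertices: any two vertices sharing a face of the blob are joined by an edge of $G$ and no interior vertex sees outside, so the filler never permits an augmenting edge and contributes $0$ to the matching. The elementary filler operations---inserting a vertex into a triangular face (adding $1$ vertex and $3$ edges, so $2n-m$ drops by $1$), attaching a shielded pendant (adding $1$ vertex and $1$ edge, so $2n-m$ rises by $1$), and adding a single chord ($2n-m$ drops by $1$ with $n$ fixed)---let me change $n$ and $m$ in fine, independent steps without disturbing $M$.

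With these blocks the remaining work is arithmetic bookkeeping. For a target $k$ the admissible pairs are exactly those with $2n-m+3\in\{13(k-1)+1,\dots,13k\}$, a window of $13$ consecutive values; within one window the ceiling stays equal to $k$, so I would use $k$ matching gadgets (raising $2n-m$ by $13k$) and then apply the filler operations to reach the precise values of $n$ and $m$. I would verify the extreme cases first---the dense end $m=2n+2$, where the formula gives $k=1$, and the sparse end $m\approx\frac{7n+95}{10}$, where it gives $k\approx\frac{n-5}{10}$---and check that the fixed sizes of the backbone and of the minimum filler are exactly what the additive constants $+3$ and $+95$ (and the denominator $10$) encode, so that every pair in the stated range is reachable.

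The main obstacle I expect is establishing maximality \emph{globally} while still hitting the exact counts: I must place all gadgets and all filler in general position so that (i) no two vertices lying in different blocks ever see each other, which would create an unanticipated compatible edge and destroy the maximality of $M$, and (ii) the whole straight-line drawing is noncrossing and has precisely $n$ vertices and $m$ edges. The local shielding technique handles each block in isolation, but simultaneously guaranteeing mutual invisibility of all blocks, keeping the edge count exact under the filler operations, and showing that the ceiling evaluates to the claimed value for \emph{every} integer pair in the range is the delicate part of the argument.
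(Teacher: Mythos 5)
Your arithmetic skeleton is sound and matches the paper's (each matching edge must ``cost'' $2v-e=13$, the ceiling is constant on windows of $13$ consecutive values of $2n-m+3$, and one settles the exactly-divisible case first and then absorbs at most $12$ extra edges), but the two geometric constructions you defer are precisely where the content of the lemma lies, and one of your stated requirements is unachievable. You never exhibit the matching gadget, and requirement (i) --- that vertices in different blocks never see each other --- cannot be met by any arrangement: two consecutive vertices on the convex hull of the \emph{whole} point set always see each other, since all other points lie strictly on one side of the line through them and hence no edge of a noncrossing graph can cross the segment between them. So if several self-contained blocks touch the global hull, unmatched vertices of distinct blocks see one another and maximality fails; shielding a block behind a cycle of $G$-edges only pushes the problem to the shield's own unmatched vertices. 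The paper's construction resolves this not by mutual invisibility but by inverting the roles: the $2k$ \emph{matched} vertices are placed in convex position (a noncrossing perfect matching, then inner-triangulated), and they are the only vertices with long-range visibility. Every unmatched vertex is either an endpoint of an isolated edge inside a triangular face of that polygon, or lies in a tiny cluster hugging the outside of one polygon edge (isolated edges at all but one outer edge, a triangulated convex polygon $T$ at the remaining one). Cluster-to-cluster visibility is blocked by the central polygon itself, and visibility from a cluster to the polygon's vertices is harmless because those vertices are matched.

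Second, your filler toolbox contains moves that do not exist, and the missing move is the one the non-divisible case needs. A triangulated convex blob admits no further chords: an inner triangulation of a convex polygon is a maximal noncrossing graph on its vertex set (indeed, that is exactly why it is safe filler), so ``add a single chord, $n$ fixed'' is impossible; likewise a pendant vertex attached inside the filler sees the non-adjacent unmatched corners of its face, which destroys maximality. But when $2n-m+3$ is not divisible by $13$, you must add up to $12$ edges \emph{without} changing $n$ or $k$, which is exactly the broken move. The paper handles this by replacing the convex-position triangulation $T$ with a triangulation of a point set of the same size having interior points (each interior point in place of a hull point yields one extra edge). So while your high-level plan parallels the paper, the gadget, the global visibility structure, and the edge-adjustment mechanism all remain unproved, and as formulated (independent mutually invisible blocks, chord insertion into a triangulation) they cannot be made to work.
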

\begin{proof}
	Let $k=\ceil{\frac{2n-m+3}{13}}$.
	Then $k\geq 1$ since $m \leq 2n+2$.
	First suppose that $2n-m+3$ is divisible by $13$, that is,  $m=2n + 3 -13k$.
	We shall construct a geometric graph on $n$ vertices and $m$ edges with a maximal compatible matching of size $k$.
	
	Choose a (noncrossing, geometric) perfect matching $M$ of $2k$ points in convex position and an (inner) triangulation of that geometric graph.
	See \fig{fig:constructionMMM} (left).
	\begin{figure}[tb]
		\centering
		\includegraphics{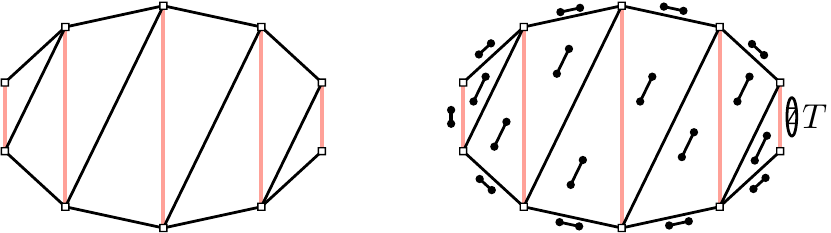}
		\caption{A geometric graph (black) with a maximal compatible matching (red).}
		\label{fig:constructionMMM}
	\end{figure}
	There are $2k-2$ triangular faces and $2k$ edges in the boundary of the outer face.
	Place an isolated edge in the interior of each triangular face.
	Further for all but one of the outer edges $e$ place another (tiny) isolated edge close to $e$ in the outer face (so that there are no visibilities between these).
	So far there are $2k + (4k-4) + (4k-2) =10k-6$ vertices and $(3k-3)+(2k-2)+(2k-1)=7k-6$ edges not in $M$.
	Close to the remaining outer edge we place a triangulation $T$ of a convex polygon on $n-10k+6$ vertices (so that there are no visibilities between these vertices and the isolated edges not in $M$).
	See \fig{fig:constructionMMM} (right).
	Note that $n-10k+6 \geq 2$ since $m\geq \frac{7n+95}{10}$.
	So the graph $T$ contains $2n-20k+9=m-7k+6$ edges.
	The final graph has in total $n$ vertices and $m$ edges not in $M$.
	Further $M$ is a maximal matching by construction.
	
	It remains to consider the case that $2n-m+3$ is not divisible by $13$.
	In this case we apply the construction above with $m'=2n + 3 -13k$ edges.
	To add the remaining $m-m'\leq 12$ edges we replace the triangulation $T$ by an appropriate triangulation of another point set that has some interior points (and hence has more edges).
	\end{proof}

\section{Augmenting to Minimum Degree Five}

In this section, we show that augmenting to a geometric graph with minimum degree five is {\sf NP}-complete.
\begin{theorem}\label{thm:minDeg5Hard}
Given a geometric crossing-free graph $G$, it is {\sf NP}-complete to decide whether there is a set of compatible edges $E$ such that $G+E$ has minimum degree five.
\end{theorem}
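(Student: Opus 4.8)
The plan is to prove {\sf NP}-completeness of deciding whether a geometric crossing-free graph $G$ admits a compatible edge set $E$ so that $G+E$ has minimum degree five. Membership in {\sf NP}\ is immediate: a certificate is the set $E$ of added edges, and one can verify in polynomial time that these edges are pairwise noncrossing, avoid the edges of $G$, and raise every vertex degree to at least five. The bulk of the work is the hardness reduction. I would reduce from \textsc{positive planar 1-in-3-SAT}, mirroring the structure used in the proof of Theorem~\ref{thm:perfMatchingHard} and the gadget philosophy of Pilz~\cite{p-acg-12}, so that many of the visibility-scaling arguments transfer.

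The core idea is to design \emph{deficiency gadgets} where most vertices already have degree five in $G$, leaving only a few distinguished vertices whose degree is exactly four; these ``hungry'' vertices each require exactly one additional incident compatible edge to reach degree five, and the geometry restricts where that edge can go. First I would build a wire gadget along each variable--clause incidence edge: a sequence of hungry vertices arranged so that the only way to satisfy everyone's degree requirement is one of two ``phases'' of a compatible matching running along the wire, encoding \textsf{true}/\textsf{false}. The visibility constraints (scaling sub-gadgets so interior vertices see nothing outside, exactly as in the twin-peaks construction) force the local matching structure. I would then construct bend and split gadgets analogously to Fig.~\ref{fig:gadgets-wire}, so that a cyclic wire with splits faithfully represents a variable that must commit to a single truth value everywhere. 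The clause gadget is the crux: it should contain hungry vertices that can all be saturated by compatible edges if and only if exactly one of the three incoming wires carries the \textsf{true} state, enforcing the 1-in-3 condition by leaving an odd or unsatisfiable degree deficit otherwise.

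The main obstacle, as usual in these planar geometric augmentation reductions, is the clause gadget: with degree-five (rather than cubic or matching) targets, the parity and counting bookkeeping is more delicate, because a vertex can receive several augmenting edges and a single added edge simultaneously helps two endpoints. I would control this by making almost all gadget vertices ``full'' at degree five in $G$ and isolating a small, carefully placed set of degree-four hungry vertices whose admissible partners are tightly constrained by mutual visibility, so that the degree-five target behaves locally like a matching constraint among the hungry vertices. The correctness argument then reduces to showing that the hungry vertices inside the clause gadget can be completely saturated by noncrossing edges exactly when one incoming wire is \textsf{true}, and I would verify this by an explicit case analysis over the eight truth combinations of the three wires, exhibiting a valid augmentation in the 1-in-3 case and a provable unsaturable hungry vertex otherwise.

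Finally, because the reduction naturally produces a collection of disjoint gadget components rather than a single connected instance, I would not need to merge them into one polygon (unlike Theorem~\ref{thm:perfMatchingHard}); the minimum-degree-five condition is a purely local, per-vertex requirement, so the instance may be a disconnected geometric graph and the reduction composes the gadgets side by side according to the planar embedding of the incidence graph. The overall instance has size polynomial in the formula, and $G+E$ achieves minimum degree five precisely when the formula is satisfiable in the 1-in-3 sense, which together with {\sf NP}-membership establishes the theorem.
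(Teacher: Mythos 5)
Your high-level architecture (NP membership via the edge-set certificate, planar-SAT reduction with wire, bend, split, and clause gadgets, visibility-scaling to localize constraints) matches the paper's, but there is a genuine gap at the crux of the reduction, and it is tied to your choice of source problem. The paper does \emph{not} reduce from \textsc{positive planar 1-in-3-SAT}; it reduces from \textsc{monotone planar rectilinear 3-SAT}~\cite{dBK2010}, where a clause only needs \emph{at least one} true literal. This choice is not cosmetic. The minimum-degree-five condition is a one-sided (monotone) constraint: every degree-four vertex needs at least one more incident edge, but nothing forbids extra edges or double coverage. Consequently, if a clause gadget can be saturated when exactly one incoming wire is true, the very same set of added edges remains feasible when a second wire switches to true --- unless the true state of a wire \emph{actively protrudes} blocking edges into the clause region. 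Your own wire design (two local ``phases'' of added edges encoding truth) provides no such protrusion, so the ``exactly one'' failure mode for two or three true wires that 1-in-3 semantics requires cannot arise from counting or parity; it would have to be engineered geometrically, and you have not done so. In the paper's Theorem~\ref{thm:perfMatchingHard} the 1-in-3 semantics comes for free because a \emph{perfect} matching is a tight two-sided constraint (every vertex matched exactly once); that mechanism is simply unavailable here. The paper's clause gadget instead exploits monotone 3-SAT: a central 7-gon of degree-four vertices whose odd count forces at least one saturating edge to escape to a wire, and the \emph{false} state of a wire blocks that escape route by crossings while the true state leaves it open --- exactly the monotone behavior the degree constraint can realize.

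Beyond this, your proposal asserts rather than constructs the gadgets, and in this kind of geometric hardness proof the explicit constructions \emph{are} the proof: one must exhibit the wire subgraph with exactly two valid augmentations (the paper's ``windmill'' pattern, forced by blocked visibilities), the split gadget, the handling of negated literals (the paper shifts the split gadget by one position along the wire to flip the transmitted orientation --- needed because monotone 3-SAT does contain negative clauses, below the variable line), and the clause gadget together with a case analysis of its feasible augmentations. Your closing observation that no merging step is needed --- since minimum degree is a per-vertex condition and the instance may be disconnected --- is correct and consistent with the paper, but it does not repair the missing clause construction. To fix the proposal, either switch the source problem to monotone planar 3-SAT and build an ``at least one true'' clause gadget along the paper's lines, or supply a concrete mechanism by which a true wire actively blocks the clause's alternative saturations; without one of these, the reduction as described does not go through.
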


\begin{proof}
	The problem is obviously in {\sf NP}, a certificate provides the added edges. {\sf NP}-hardness is shown by a reduction from \textsc{monotone planar rectilinear 3-SAT}. 
		\begin{figure}[htb]
		\centering
\includegraphics{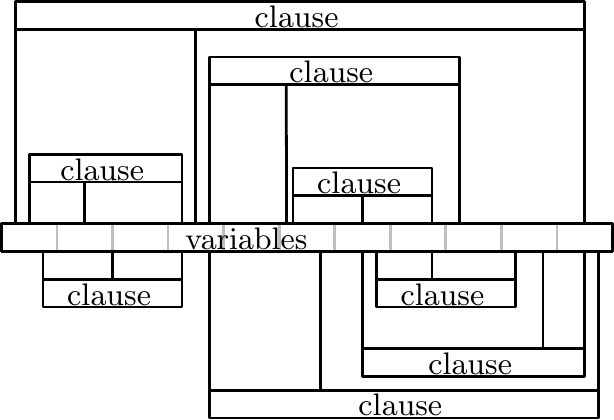}
\caption{A monotone planar 3-SAT instance with a corresponding embedding.}	
\label{fig:mon3sat}
\end{figure}

	In this problem, shown to be {\sf NP}-hard by de Berg and Khosravi~\cite{dBK2010}, we are given an instance of monotone (meaning that each clause has only negative or only positive variables) 3-SAT with a planar variable-clause incidence graph. In this graph, the variable and clause gadgets are
	represented by rectangles. All variable rectangles lie on a horizontal line. The clauses with positive variables lie above the variables and the clauses with negative variables below. The edges
	connecting the clause gadgets to the variable gadgets are vertical line segments and no
	edges cross. 
	See \fig{fig:mon3sat}.
	\begin{figure}[tb]
		\centering	
		\includegraphics[width=0.6\textwidth]{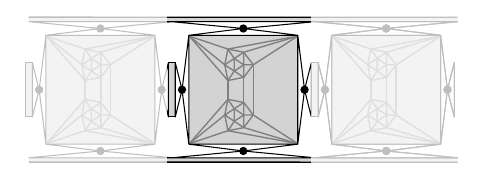}
		\caption{A (geometric) subgraph whose copies will form a wire gadget.}
		\label{fig:wirepiece}
	\end{figure}

For a given monotone planar 3-SAT formula, we take an embedding of its incidence graph (as discussed) and replace its elements by gadgets. 
Note that the corresponding rectilinear layout can be computed in polynomial time and 
has coordinates whose size is bounded by a polynomial~\cite{TT89}.
We use a \emph{wire gadget} that propagates the truth assignments; see \fig{fig:wirepiece}. It consists of a linear sequence of similar subgraphs, each containing exactly four vertices of degree~four (the other vertices have at least degree five). The gray areas contain subgraphs where all vertices have at least degree five. The main idea is that we need to add an edge to each of the 
	vertices of degree four surrounding the big gray squares. But due to blocked visibilities this can only be achieved by a \enquote{windmill} 
	pattern, which has to synchronize with the neighboring parts; see \fig{fig:wirestates}. Thus, we have exactly two ways 
to add edges in order to augment the wire to a graph with minimum degree five. 

	\begin{figure}[tb]
		\centering	
		\includegraphics[width=0.97\textwidth]{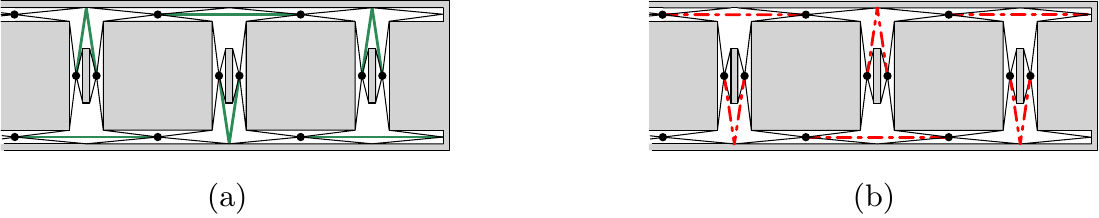}
		\caption{The wire gadget with its only true possible augmentations, associated
		with the assignment true (a) and false (b).}
		\label{fig:wirestates}
	\end{figure}

A bend in a wire is shown in \fig{fig:bendsplit}.
	The truth assignment of a wire can be duplicated by the \emph{split gadget} as shown in \fig{fig:bendsplit}. 
	
	\begin{figure}[tb]
		\centering	
		\includegraphics[width=0.8\textwidth,page=2]{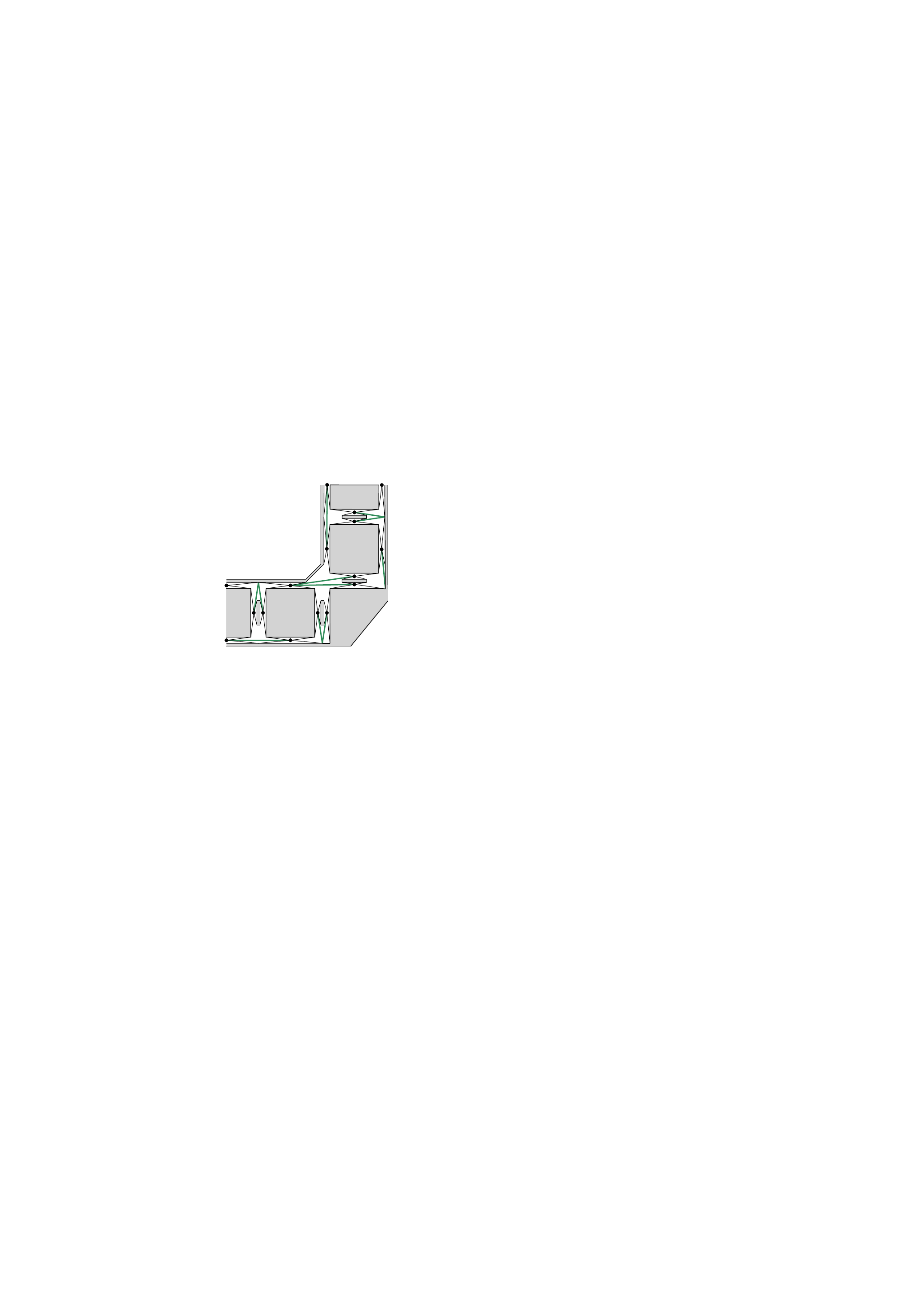}
		\caption{A bended wire (a) and the split gadget (b).}
		\label{fig:bendsplit}
	\end{figure}

A variable is represented by a long wire with split gadgets. 
 Recall that in our reduction, all variables lie on a horizontal line. The clauses with positive variables lie above and the ones with negated variables lie below this line. We can control
 whether a variable or a negated variable is transmitted to the clause gadget by 
 choosing appropriate positions for the corresponding split gadgets. In particular, 
 if we translate the split gadget at the wire by one position to the left or right and keep the truth assignment for the wire,
 the orientation of the augmentation at the position of the new split gadget is flipped.

The \emph{clause gadget} is illustrated in \fig{fig:clause}. The wires enter from left, right and below (respectively above). The 7-gon in the middle of the clause gadget can be augmented to a subgraph with minimum degree five if and only if it is connected to at least one wire in the true state. See also \fig{fig:clausestates}.	
\end{proof}
\begin{figure}[tb]
	\centering
	\includegraphics[width=0.5\textwidth,page=3]{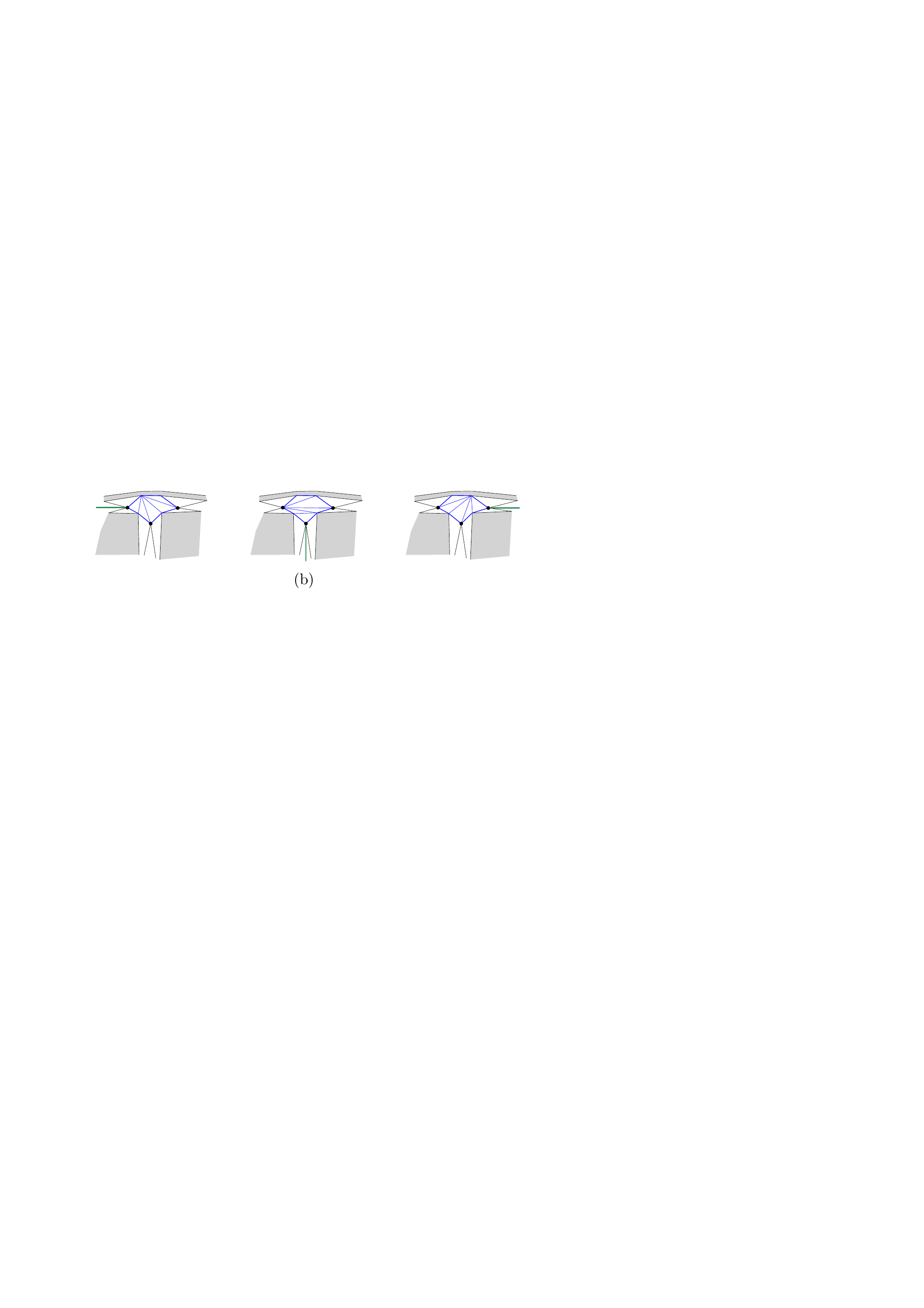}
	\caption{A clause gadget, the three bold segments represent that the corresponding literals are set to true. The central 7-gon (blue) can be augmented to a subgraph of degree at least five if and only if at least one literal is true.}
	\label{fig:clause}
\end{figure}

\begin{figure}[tb]
	\centering
	\includegraphics[width=0.9\textwidth]{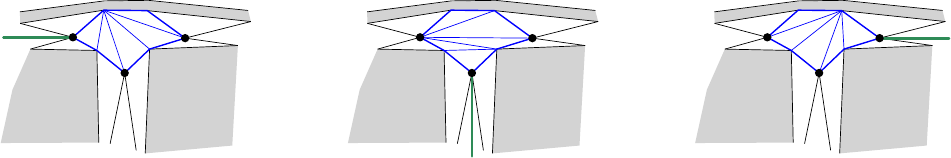}
	\caption{ The three valid possibilities to augment the 7-gon in the clause gadget if one literal is true.}
	\label{fig:clausestates}
\end{figure}

\section{Conclusions}

We study how many noncrossing straight-line matching edges can be drawn on top of a geometric graph $G$ without crossing or using the edges of $G$.
From an algorithmic point of view we show that it is hard to decide whether a perfect matching can be drawn on top of a polygon in this way.
Our results on minimal maximal matchings show that a greedy algorithm will always draw at least $\frac{n}{7}$ edges on top of any $n$-vertex polygon.
However, there are instances where it may draw not more than this amount of edges, although larger compatible matchings exist.

We are interested in how the function $\mmm(G)$ (the size of a minimal maximal compatible matching of $G$) behaves among all geometric graphs $G$ on $n$ vertices and at most $dn$ edges for any value $d\in[0,3]$.
Our results show that degree constraints (like $d$-regularity) help to determine $\mmm(G)$ and also increase the value of $\mmm(G)$ (compared to graphs on the same average degree).
Indeed, we show that any $2$-regular graph has at least $(n-3)/11$ edges in any maximal compatible matching while the construction in Lemma~\ref{lem:constructionMMM} shows that there is a geometric graph $G$ on $n$ vertices with $n$ edges and $\mmm(G)=(n+3)/13$.
We do not know whether there is a family of such geometric graphs with values of~$\mmm(G)$ (asymptotically) even smaller than $n/13$.
It is also not clear for which graphs $\mmm(G)$ is maximized.
For some drawings of empty graphs $G$ we have~$\mmm(G)=\lceil\frac{n}{3}\rceil$. Is this the (asymptotically) largest possible value?


\paragraph*{\bf Acknowledgments.} This work was initiated during the 15th European Research Week on Geometric Graphs (GG Week 2018) in Pritzhagen, Germany.
We thank Kevin Buchin, Michael Hoffmann, Wolfgang Mulzer and Nadja Seiferth for helpful discussions.

 \bibliographystyle{splncs04}
 \bibliography{bib-wg}

\begin{thebibliography}{10}
\providecommand{\url}[1]{\texttt{#1}}
\providecommand{\urlprefix}{URL }
\providecommand{\doi}[1]{https://doi.org/#1}

\bibitem{CompMatchingConj}
Aichholzer, O., Bereg, S., Dumitrescu, A., Garc{\'{\i}}a, A., Huemer, C.,
  Hurtado, F., Kano, M., M{\'{a}}rquez, A., Rappaport, D., Smorodinsky, S.,
  Souvaine, D., Urrutia, J., Wood, D.R.: Compatible geometric matchings.
  Comput. Geom.  \textbf{42}(6--7),  617--626 (2009).
  \doi{10.1016/j.endm.2008.06.040}

\bibitem{aght-cmgg-11}
Aichholzer, O., Garc{\'i}a, A., Hurtado, F., Tejel, J.: Compatible matchings in
  geometric graphs. In: Proc. XIV Encuentros de Geometr\'{\i}a Computacional.
  pp. 145--148. Alcal\'a, Spain (2011)

\bibitem{AKRST19}
Akitaya, H.A., Korman, M., Rudoy, M., Souvaine, D.L., T{\'{o}}th, C.D.:
  Circumscribing polygons and polygonizations for disjoint line segments. In:
  Proc. of the 35th {I}nternational {S}ymposium on {C}omputational {G}eometry
  (SoCG 2019), LIPIcs. Leibniz Int. Proc. Inform., vol.~129, pp. Art. No. 9,
  17. Schloss Dagstuhl (2019). \doi{10.4230/LIPIcs.SoCG.2019.9}

\bibitem{BBCGL19}
Bae, S.W., de~Berg, M., Cheong, O., Gudmundsson, J., Levcopoulos, C.: Shortcuts
  for the circle. Comput. Geom.  \textbf{79},  37--54 (2019).
  \doi{10.1016/j.comgeo.2019.01.006}

\bibitem{dBK2010}
de~Berg, M., Khosravi, A.: Optimal binary space partitions in the plane. In:
  Proc. of the 16th Annual International Conference on Computing and
  Combinatorics (COCOON 2010). Lecture Notes in Computer Science, vol.~6196,
  pp. 216--225 (2010). \doi{10.1007/978-3-642-14031-0\textunderscore25}

\bibitem{CGKPSTW17}
Cohen, N., Gon{\c{c}}alves, D., Kim, E.J., Paul, C., Sau, I., Thilikos, D.M.,
  Weller, M.: A polynomial-time algorithm for outerplanar diameter improvement.
  J. Comput. System Sci.  \textbf{89},  315--327 (2017).
  \doi{10.1016/j.jcss.2017.05.016}

\bibitem{DCGSS17}
De~Carufel, J.L., Grimm, C., Schirra, S., Smid, M.: Minimizing the continuous
  diameter when augmenting a tree with a shortcut. In: Proc. of the Algorithms
  and Data Structures Symposium (WADS 2017), Lecture Notes in Comput. Sci.,
  vol. 10389, pp. 301--312. Springer, Cham (2017).
  \doi{10.1007/978-3-319-62127-2\textunderscore26}

\bibitem{HRR15}
Hartmann, T., Rollin, J., Rutter, I.: Regular augmentation of planar graphs.
  Algorithmica  \textbf{73}(2),  306--370 (2015).
  \doi{10.1007/s00453-014-9922-4}

\bibitem{HT13}
Hurtado, F., T{\'{o}}th, C.D.: Plane geometric graph augmentation: a generic
  perspective. In: Thirty essays on geometric graph theory, pp. 327--354.
  Springer, New York (2013). \doi{10.1007/978-1-4614-0110-0\textunderscore17}

\bibitem{disjoint_matchings}
Ishaque, M., Souvaine, D.L., T{\'{o}}th, C.D.: Disjoint compatible geometric
  matchings. Discrete {\&} Computational Geometry  \textbf{49}(1),  89--131
  (2013). \doi{10.1007/s00454-012-9466-9}

\bibitem{jansen}
Jansen, K.: One strike against the min-max degree triangulation problem.
  Comput. Geom.  \textbf{3},  107--120 (1993).
  \doi{10.1016/0925-7721(93)90003-O}

\bibitem{mulzer_rote}
Mulzer, W., Rote, G.: Minimum-weight triangulation is {NP}-hard. J. {ACM}
  \textbf{55}(2),  11:1--11:29 (2008). \doi{10.1145/1346330.1346336}

\bibitem{p-acg-12}
Pilz, A.: Augmentability to cubic graphs. In: Proc. 28th European Workshop on
  Computational Geometry (EuroCG 2012). pp. 29--32. Assisi, Italy (March 2012)

\bibitem{Rap89}
Rappaport, D.: Computing simple circuits from a set of line segments is
  {NP}-complete. SIAM J. Comput.  \textbf{18}(6),  1128--1139 (1989).
  \doi{10.1137/0218075}

\bibitem{RIT86}
Rappaport, D., Imai, H., Toussaint, G.T.: On computing simple circuits on a set
  of line segments. In: Proc. of the Proceedings of the 2nd Annual Symposium on
  Computational Geometry (SoCG 1986). pp. 52--60. {ACM} (1986).
  \doi{10.1145/10515.10521}

\bibitem{TT89}
{Tamassia}, R., {Tollis}, I.G.: Planar grid embedding in linear time. IEEE
  Transactions on Circuits and Systems  \textbf{36}(9),  1230--1234 (1989)

\end{thebibliography}

\end{document}